\documentclass{IEEEtran}
\IEEEoverridecommandlockouts

\usepackage{cite}
\usepackage{amsmath,amssymb,amsfonts}
\usepackage{graphicx}
\usepackage{textcomp}
\usepackage{xcolor}
\usepackage{xspace}
\usepackage{booktabs}
\usepackage{algorithm}
\usepackage{algpseudocode}
\usepackage{xurl}
\allowdisplaybreaks[1]
\newtheorem{theorem}{Theorem}

\newtheorem{corollary}{Corollary}

\definecolor{lightgray}{gray}{0.9}
\definecolor{darkergray}{gray}{0.8}
\definecolor{forestgreen}{RGB}{34,139,34}

\usepackage{hyperref}
\hypersetup{
    colorlinks=true,
    citecolor=forestgreen,
    filecolor=black,
    linkcolor=red,
    urlcolor=blue
}
\usepackage[capitalize]{cleveref}

\usepackage{amssymb} % Required for the \square symbol

\makeatletter
\renewenvironment{proof}[1][Proof]{\par
  \normalfont \topsep6\p@\@plus6\p@\relax
  \trivlist
  \item[\hskip\labelsep\itshape #1.]\ignorespaces
}{%
  \hspace*{\fill}$\square$\endtrivlist\@endpefalse
}
\makeatother

\DeclareMathOperator{\OPT}{OPT}
\DeclareMathOperator{\ALG}{ALG}

\begin{document}

\title{Fast Relax-and-Round Unit Commitment With Economic Horizons}

\author{%
\IEEEauthorblockN{Shaked Regev, Evan J. R. Brody, Charles Foltz, Eve Tsybina, Slaven Pele\v{s}}
\IEEEauthorblockA{Oak Ridge National Laboratory, 1 Bethel Valley Road, Oak Ridge, Tennessee, USA}
\thanks{Notice: This manuscript has been authored by UT-Battelle, LLC, under contract DE-AC05-00OR22725 with the US Department of Energy (DOE). The US government retains and the publisher, by accepting the article for publication, acknowledges that the US government retains a nonexclusive, paid-up, irrevocable, worldwide license to publish or reproduce the published form of this manuscript, or allow others to do so, for US government purposes. DOE will provide public access to these results of federally sponsored research in accordance with the DOE Public Access Plan (\href{https://www.energy.gov/doe-public-access-plan}{https://www.energy.gov/doe-public-access-plan}). Research sponsored by the Laboratory Directed Research and Development Program of Oak Ridge National Laboratory, managed by UT-Battelle, LLC, for the US Department of Energy.}
\thanks{Corresponding author: Shaked Regev (e-mail: regevs@ornl.gov).}
}

\maketitle

\begin{abstract}
% Change_start
The US energy system is increasingly under pressure to serve expanding data loads and to accommodate a larger number of generating units with varying technologies and ownership structures. Therefore, developing new unit commitment methods remains a priority for reliable and affordable grid operations.
% Change_complete
We expand our novel computational method for unit commitment (\textbf{UC}) to include ramping constraints and long-horizon planning and provide a theoretical bound on its error. We introduce a fast novel algorithm to commit hydro-generators. We solve problems with thousands of generators at 5-minute market intervals. We show that our method can solve UC problems with over 20,000 generators in approximately 10 seconds on commodity hardware and that an increased planning horizon leads to sizable operational cost savings. We attain this runtime improvement by introducing a heuristic tailored for UC problems. Our method can be implemented using existing continuous optimization solvers and adapted for different applications. We prove a bound on the error of these solvers and show that it vanishes (in relative terms) as the problem becomes larger. We also introduce a fast and accurate hydro UC algorithm. Combined, these algorithms would allow an operator to make horizon-aware economic decisions for large systems with hydro units.
\end{abstract}

\begin{IEEEkeywords}
optimization, power systems
\end{IEEEkeywords}

\section{Introduction}
\label{sec:introduction}

The electric power system is under increasing operational pressure due to significant load volatility caused by the increasing demand for electricity and rapid expansion of data centers. The North American Electric Reliability Corporation classifies data centers as a distinct load category, recognizing their unique planning, operational, and balancing risks \cite{NERC_loads}. Unlike traditional industrial loads, data centers can ramp rapidly in response to computational workloads, creating steep intra-hour load changes \cite{Frontier2024}. 

In power systems with a high share of such loads, unit commitment (\textbf{UC}) changes in multiple ways. Connecting fast-ramping, often smaller, generators increases the size of UC problems. Further, otherwise ``invisible'' intra-hour volume generated by slow large unit ramping must be explicitly modeled to ensure generation volume  transparency in UC. Finally, storage is operationally critical and must be committed to ensure availability during periods of rapid load adjustment.

Combined, these effects impact the temporal structure of UC problems. First, tighter and more reliable ramping requires a shift toward finer temporal resolution. There is an ongoing discussion about moving from the traditional 1 hour interval to sub-hourly intervals, possibly as short as 5 minutes~\cite{chen2023,KAZEMI2016338}. Further, incorporating flexible resources requires multi-period optimization, since they can strategically allocate their generation between several periods. 

Increasing temporal granularity causes a proportional growth in problem size, i.e., the number of variables and constraints~\cite{Kim20185276, Safdarian20201834}.  Intertemporal coupling leads to a super-linear increase in runtime. Most large-scale UC formulations rely on mixed-integer program (\textbf{MIP}) solvers~\cite{achterberg2013}. Their runtime scales poorly as spatial or temporal resolution increases \cite{morrison2016,  ridha2020, berthold2015, koch2022}. Existing heuristic decomposition and acceleration methods' performance typically deteriorates when the objective function is strongly temporally coupled, as it is with multi-period UC with ramping constraints. In such settings, solving a single long-horizon UC problem is considerably more demanding than solving multiple shorter horizon problems \cite{Kim20185276, Safdarian20201834}. Therefore, many studies limit  the number of generating units, temporal resolution, or planning horizon.

Another class of commercial MIP solvers, such as Gurobi, linearizes the objectives and constraints of the problem, relaxing the MIPs to mixed-integer linear programs %(\textbf{MILP}s) #removed this because we only use it here
~\cite{gurobi}. This allows using heuristics such as branch-and-bound to decrease runtime compared to MIP solvers, but their worst case (and often in practice) scaling is still exponential in the number of variables (in this case, related to the number of generators) and constraints. Further, the resulting linearization does not capture voltage limits, reactive power constraints, and non-linearities in generator production cost. Our solver uses an interior point method common for continuous problems, which is not limited to linear functions and only requires twice-differentiable functions. In this paper, only our objective is non-linear, but in future work we will introduce transmission constraints in their non-linear (alternating current optimal power flow) form. 

% \textcolor{red}{This paragraph needs to be revised once we have full Gurobi results. -EB.}In earlier work we showed that our solver outperforms a baseline MIP solver by orders of magnitude on runtime and achieves the same results when the problem is sufficiently large. For a simple one-period 276 generator UC problem, with no mechanical constraints, the MIP solver took 10 hours~\cite{regev2026oneperiod}, making it infeasible for the multi-period large system testing in this paper.
% Here, we extend our approach to solving a multi-period problem. We include temporally binding operational constraints and temporally coupled objective function problem, in which costs and operational decisions are explicitly linked across multiple time intervals. We consider intertemporal runtime constraints (minimum run time and maximum daily starts) and ramping constraints.

We further extend this formulation to include hydropower.
We focus on hydropower reservoirs, which are charged uncontrollably by rainfall and discharged controllably, subject to water balance constraints~\cite{Finardi2006835}. Pumped hydropower, which functions as a battery with controllable charging and discharging~\cite{fenrg}, is deferred to future work. 
% ~\cite{regev2026ramping}, we proposed a fast relax-and-round UC (\textbf{RRUC}) algorithm that incorporates temporally binding operational constraints while maintaining weak temporal coupling for the objective function. Prior to that,Therefore, we focus on showing our solver's runtime scaling and that it does not introduce further errors as the problem size increases.
% This result agrees with the mathematical proof of \cite{Bertsekas1983OptimalShortTermScheduling}, which show that the relaxed UC approximates the true problem increasingly well as the number of units grows. However, RRUC avoids dynamic programming, further lowering run-time.

%for instance for large hydro power plants with seasonal reservoir capability. 
%Then we demonstrate how the model can be applied to small hydro, such as those found on island or river runoff power plants, where water inflow is stochastic and the discharge decisions have to be made under the pressure from storage levels. 

\cref{sec:multiperiod} introduces our long-horizon planning UC formulation.
\cref{sec:hydro} introduces a novel algorithm to adaptively commit hydro units before traditional units and tests its performance empirically.
\cref{sec:summary} summarizes our results and future research directions. 
Appendix \ref{sec:AppendixA} shows a proof that our main RRUC algorithm in \cref{sec:multiperiod} has a relative error that vanishes as the problem size increases and gives a tighter bound on the deviation from the solution to the relaxed problem. Appendix \ref{sec:AppendixB} shows a proof that the original problem proposed in \cref{sec:hydro} is NP-Hard and that our proposed algorithm runs in log-linear time.

\section{Horizon-Aware Unit Commitment}
\label{sec:multiperiod}
\subsection{Mathematical Formulation}
% Previously~\cite{regev2026oneperiod}, we optimized generation costs 1 time period at a time. 
% Our problem formulation in \cite{regev2026ramping} includes constraints on the ability to generate projected demands in the next 48 hours, but only uses a generator efficiency heuristic to avoid turning on expensive generators to meet constraints. 
We expand our previous work's formulation~\cite{regev2026oneperiod} to explicitly include minimum run time, maximum daily starts, ramping constraints and generation costs in subsequent time periods. We use the mathematical model in \cref{eq:UnitCommitment} to solve UC.
\begin{subequations}\label{eq:UnitCommitment}
\begin{align}
\min_{P_i, P_{\delta,i}, u_i} \quad &
\sum_{i\in\mathcal{G}_m\cup\mathcal{G}_d} \left(a_i P_i^2 + b_i P_i + c_i\right)u_iu_{t-1,i} \nonumber\\
+ &\sum_{i\in\mathcal{G}_d}K_i(u_{t-1,i}(1-u_i)+(1-u_{t-1,i})u_i)
\nonumber\\
+&\gamma\sum_\delta\sum_{i\in\mathcal{G}_m\cup\mathcal{G}_d}\left(a_iP_{\delta,i}^2+b_iP_{\delta,i}+c_i\right)u_i
\label{eq:ObjUC}
\\[6pt]
\text{s.t.} & \max(P_{\min,i},P_{t-1,i}-r_{d,i}) \le P_i   \nonumber\\ &\le\min(P_{\max,i},P_{t-1,i}+r_{u,i})
\label{eq:continuous_vars_UC}
\\&
\forall i\in\mathcal{G}_d : u_i\in \{0,1\}
\label{eq:boolean_vars_UC}
\\&
\forall i\in\mathcal{G}_m : u_i = 1
\label{eq:MustRunConst}
\\&
\sum_{i \in \mathcal{G}_m} P_i + \sum_{i \in \mathcal{G}_d} \left[u_i P_i + (1-u_i)P_{\min,i}\right]u_{t-1,i} 
\nonumber\\&\ge D-S_r,
\label{eq:Demand_Const_UC}
\\&
\sum_{i} u_i P_{\max,i} \ge 
 R_{\uparrow}
\label{eq:Max_Const_UC}
\\&
\sum_{i} u_i P_{\min,i}\le R_{\downarrow}\footnotemark
\label{eq:Min_Const_UC}
\\& 
\sum_{i} u_i P_{\delta,i} \geq D_\delta, \; \forall \delta,
\label{eq:deltaD_UC}
\end{align}
\end{subequations}
with $R_{\uparrow} \doteq D_{\max,48} + 3\sigma_D + R - S_{\max,r}$ and $R_{\downarrow} \doteq D_{\min,48} - \sigma_D - S_{\min,r}$.

The extension of our previous work is generator ramping and mechanical constraints and the last term in \cref{eq:ObjUC} and \cref{eq:deltaD_UC}. These balance a UC solution that is efficient now with one that will efficiently meet future demand. By our convention, all variables and parameters refer to the current time period being solved, unless otherwise noted. $\mathcal{G}_m$ and $\mathcal{G}_d$ and are sets of must-run and discretionary generators, respectively. The decision variables are:

\footnotetext{We choose these contingencies as an example, but our model is adaptable to any contingency.}

\begin{itemize}
    \item $P_i \in [P_{\min,i}, P_{\max,i}]$ for $i \in \mathcal{G}_m\cup \mathcal{G}_d$: power output of must-run or discretionary generator $i$
    \item $P_{\delta,i}$: Production level by generator $i$ at demand level $\delta$. The set of $\delta$s considered can be all demand points in the 48 hour time window, or a representative sample.\footnote{This utilizes the cost curves' monotonicity. If a set of units can supply 2 demands efficiently, it can supply any demand between them efficiently.}
    \item $u_i \in \{0, 1\}$ for $i \in \mathcal{G}_d$: commitment status of discretionary generator $i$. $u_i=0$ means staying off or starting to ramp down. $u_i=1$ means staying on or starting to ramp up.
\end{itemize}

The parameters supplied to the optimization solver are:
\begin{itemize}
    \item $a_i, b_i, c_i$: quadratic cost coefficients usually resulting from fuel costs or strategic bidding behavior.
    \item $P_{\max,i}, P_{\min,i}$: maximum capacity and minimum stable output, determined by a unit's physical characteristics.
    \item $D$, $\sigma_D$: demand volume and standard deviation
    \item $K_i$: commitment change penalty for generator $i$.\footnote{The true startup and shutdown costs are rarely the same. Averaging them prevents market distortion and removes artificial incentives to turn on or off.}
    \item $r_{u,i}$ and $r_{d,i}$: generator $i$'s  limits for ramping up and down, normalized so that $\Delta t = 1$.
    \item $\gamma$: bias factor to select units that will be more efficient at meeting future demand. In our model \(\gamma = 1\).
    \item \(u_{t-1,i}\): indicator that is 1 if and only if generator \(i\) was ramping up or on in the previous time period
    \item $P_{t-1,i}$: unit $i$'s production level at the previous time period. 
    \item $S_r$: supply produced by generators that are already ramping before the solve (controlled). Follows ramping profiles in \cref{fig:ramp_profile_smooth}.
    \item $S_{\max,r}$: total capacity of ramping up unit  (ramping down generators will be off before the contingency)
    \item $S_{\min,r}$: minimum working capacity of ramping up units 
    \item ${D}_{\min,48}$ and ${D}_{\max,48}$: minimum and maximum volumes of predicted demand in next 48 hours.
    \item ${D}_{\delta}$: discrete representation of demand probability in next 48 hours.
    \item $R = \max_{i}P_{\max,i}$: reserve margin, used to handle  \(N - 1\) contingencies of generator outages. 
\end{itemize}
\cref{eq:UnitCommitment} is a MIP, which in general are NP-hard. Branch-and-bound methods are typically used to approximate solutions~\cite{morrison2016}. Our algorithm temporarily relaxes $u_i\in \{0, 1\}$ to $y_i \in [0, 1]$ to solve \cref{eq:UnitCommitment} in a similar fashion to branch-and-bound methods, but we use different heuristics that are specific to UC problems. We first re-order discretionary generators $\mathcal{G}_d$ so that \(y_1\geq\ldots\geq y_{|\mathcal{G}_d|}\). We then find the first $\tilde{m}$ discretionary generators such that
\begin{align}
&\sum_{i\in\mathcal{G}_m} P_{\max,i}
+
\sum_{\substack{1\leqslant i \leqslant \tilde m\\ i\in\mathcal{G}_d}} P_{\max,i}
\;\ge\;
R^\uparrow
% & R_{\tilde m}
% =\max\!\left\{
% \max_{i\in\mathcal{G}_m} P_{\max,i},
% \;
% \max_{\substack{1\leqslant i\leqslant \tilde m\\ i\in\mathcal{G}_d}} P_{\max,i}
% \right\}.
\label{eq:findmin}
\end{align}

Some  previously committed units must operate due to runtime and power ramping constraints (the set \(\mathcal{G}_m\)). Therefore, the minimum number of generators to operate is $m\doteq |\mathcal{G}_m|+\tilde m$. We can calculate $m_{\max}$, the maximum number of generators that can operate, using \cref{eq:Min_Const_UC} analogously to the way we used \cref{eq:Max_Const_UC} to derive \cref{eq:findmin}. This gives us \cref{eq:findmax}:
\begin{equation}
\sum_{i\in\mathcal{G}_m} P_{\min,i}
+
\sum_{\substack{1\leqslant i \leqslant \tilde{m}_{\max}\\ i\in\mathcal{G}_d}} P_{\min,i}
\;\ge\;
R^\downarrow,
\label{eq:findmax}
\end{equation}
with $m_{\max}\doteq |\mathcal{G}_m|+\tilde m_{\max}$. Note \cref{eq:deltaD_UC} is satisfied by construction, because $\forall \delta: D_{\min,48}\leq D_\delta \leq D_{\max,48}$.
We then solve economic dispatch with all options of generators that can supply the load. Meaning, $\forall k: \; m \le k \le m_{\max}$, we set $y_{j\le k}=1$ and $y_{j>k}=0$ and solve:
\begin{subequations}\label{eq:OptimizationEco}
\begin{align}
\min_{P_j} \quad &
\sum_{j=1}^{k} \left(a_j P_j^{2} + b_j P_j + c_j \right)
\label{eq:EcoObj}
\\[6pt]
\text{s.t.} \quad &
\max(P_{\min,i},P_{t-1,i}-r_{d,i}) \le P_i   \nonumber\\ &\le\min(P_{\max,i},P_{t-1,i}+r_{u,i})
\label{eq:EcoPower}
\\
&
\sum_{j=1}^{k} P_j \ge D.
\label{eq:EcoDemand}
\end{align}
\end{subequations}
Finally, we take the solution with lowest objective. Note that we account for the on/off cost terms in line 3 of \cref{eq:ObjUC} within the rounding scheme and add them to the objective of their respective economic dispatches (\cref{eq:OptimizationEco}), before deciding the minimal objective. \cref{eq:OptimizationEco} only optimizes over the current period, because \cref{eq:UnitCommitment} is only used to decide which units to turn on or off based on projected demand. It does not commit generators to producing $P_\delta$ power when demand $D_\delta$ arrives, it merely ensures they can do so efficiently, when needed. 

\cref{alg:ramp} summarizes RRUC. Line 3 is not strictly necessary, but when the linear program (\textbf{LP}) solver is set to a low tolerance (in our implementation, \(10^{-9}\)), this step helps with numerical precision in the \(y_i\) values, which reduces iterations of the for-loop on line 9. 
% For brevity in the pseudocode of \cref{alg:ramp} we write $R_{\uparrow} \doteq D_{\max,48} + 3\sigma_D + R - S_{\max,r}$ and $R_{\downarrow} \doteq D_{\min,48} - \sigma_D - S_{\min,r}$.
% \begin{align*}
%     R_{\uparrow} &\doteq D_{\max,48} + 3\sigma_D + R - S_{\max,r}\\
%     R_{\downarrow} &\doteq D_{\min,48} - \sigma_D - S_{\min,r}
% \end{align*}
See a bound on the error in Appendix \ref{sec:AppendixA}.
% \begin{algorithm}[htbp]
% \caption{Runtime and ramp constrained RRUC}
% \label{alg:ramp}
% \begin{algorithmic}[1]
% \For{\textbf{each period}}
%     \State Solve \cref{eq:UnitCommitment} with $u_i\in\{0,1\}\rightarrow y_i\in[0,1]$
%   \State Re-order generators so that \(y_1\geq \ldots\geq y_n\)
%   \State \(s\gets\min\{i : \}\)
%   \For{ each $\mathcal{G}_d$ satisfying (\cref{eq:continuous_vars_UC})-(\cref{eq:deltaD_UC})}
%     \State Solve economic dispatch \cref{eq:OptimizationEco} with $\mathcal{G}_m, \mathcal{G}_d$    % with all must run generators and this discretionary generator set
%     \If{Objective is lowest so far}
%       \State Update best solution and objective
%     \EndIf
%   \EndFor
%   \State Update generator states
%   \State {Find must run units (with minimum on time not yet reached or with $P_{t-1,i}-P_{\min,i}$ too large to shut off)}
%   \State Find can't start generators (those that have exceeded  maximum allowed daily starts or are already ramping)
%   \State \textbf{save} Best solution and best objective
% \EndFor
% \end{algorithmic}
% \end{algorithm}
\begin{algorithm}[htbp]
\caption{Runtime and ramp constrained RRUC}
\label{alg:ramp}
\begin{algorithmic}[1]
\For{\textbf{each period}}
    \State Solve \cref{eq:UnitCommitment} with $u_i\in\{0,1\}\rightarrow y_i\in[0,1]$
    \State Fix all solution values except \(y_i\), solve the induced LP
    \State Rank generators in descending order ( \(y_1\geq \ldots\geq y_n\))
    \State \(\ell\gets\min\{k : \sum_{i=1}^kP_{\max,i}\geq R_{\uparrow}\}\)
    \State \(h\gets \max\{k:\sum_{i=1}^kP_{\min,i}\leq R_{\downarrow}\}\)
    \State \(\ell' \gets \max\{k : y_k=1\}\)
    \State \(h' \gets \max\{k : y_k > 0\}\)
    \For{\(k\gets\max(\ell,\ell')\)\textbf{\ to\ }\(\min(h,h')\)}
        \State Solve \cref{eq:OptimizationEco} with generators $1,\ldots,k$
        \State Add state change costs to objective
        \If{objective is lowest so far}
            \State Update best solution and objective
        \EndIf
    \EndFor
    \State Update generator states
    \State {Find must run units (with minimum on time not yet reached or with $P_{t-1,i}-P_{\min,i}$ too large to shut off)}
    \State Find can't start generators (those that have exceeded  maximum allowed daily starts or are already ramping)
    \State \textbf{save} Best solution and best objective
\EndFor
\end{algorithmic}
\end{algorithm}

\begin{figure}[htbp]
\centering
\includegraphics[width=1.00\linewidth]{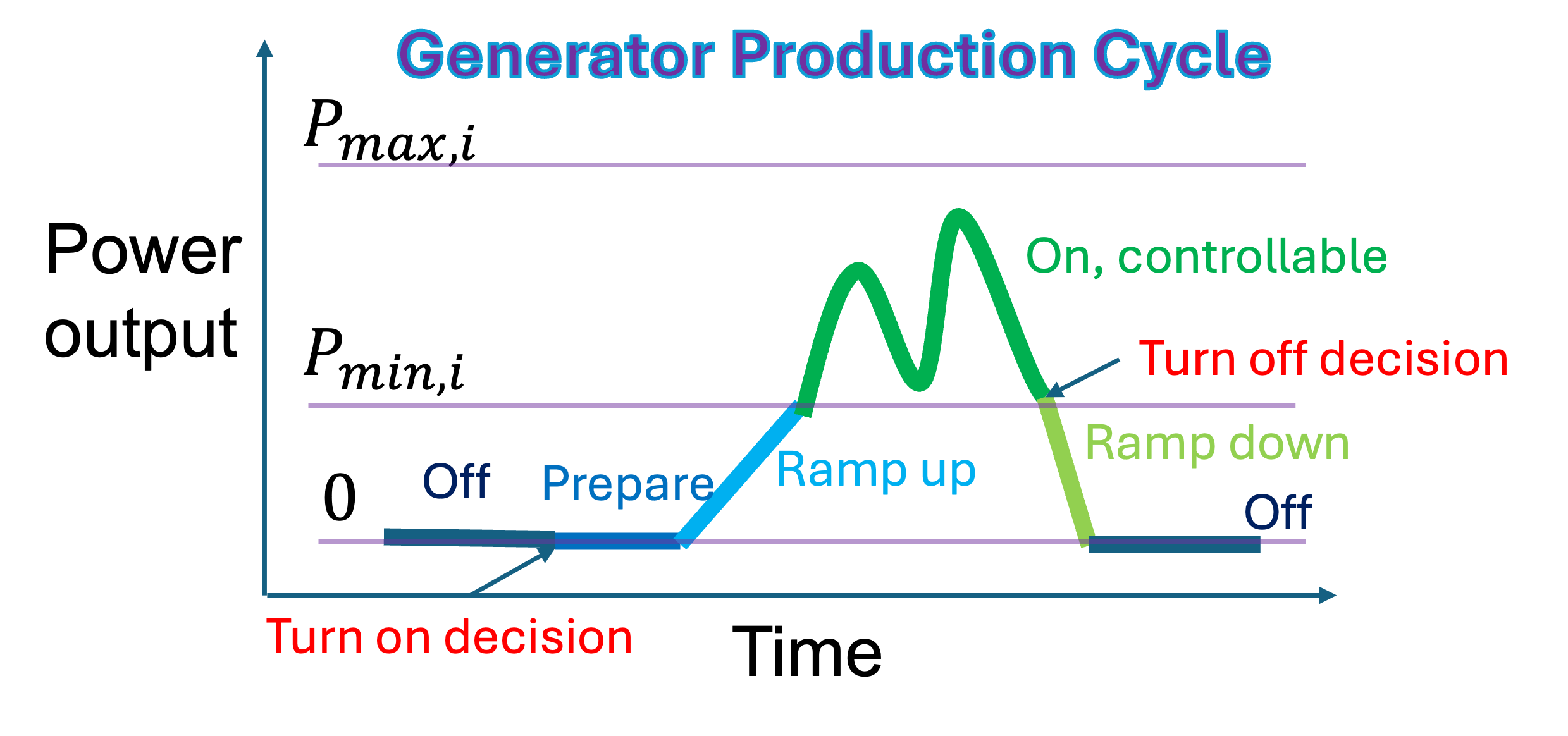}
\caption{\label{fig:ramp_profile_smooth} Unit $i$'s typical production cycle. When on, $P_i\in [P_{\min,i},P_{\max,i}]$, though it cannot exceed its ramping power. It can turn off if $P_{t-1,i}-r_{d,i}\leq P_{\min,i}$. The only other decision can be made when the unit is off, to start ramping up. During other periods, it is uncontrollable.}
\end{figure}

\cref{fig:ramp_profile_smooth} shows a typical production cycle of a unit and the allowed transitions. Each unit can only be in one operating state in any time period, so we round up ramping times from \cite{pjm} to whole periods. Rounding up ensures our solution is a more restrictive UC than the actual UC, meaning it is necessarily feasible with respect to ramping constraints\footnote{This is a limitation on using discrete time periods to model a continuous process that is not particular to our solver. Solving UC at smaller time steps reduces this issue, along with others, motivating faster methods such as ours.}.  This means a unit must occupy each state for at least one time period. We use preprocessing to account for the accumulated runtime and ramping periods. We store information about the duration of off, on, or ramping for each unit and use it when solving \cref{eq:UnitCommitment}.

\subsection{Simulation and Results}
Our simulations use historical PJM data for load and generating unit bids. PJM recorded its historical on June 23\textsuperscript{rd}, 2025 at 5 pm, with total load of 160.2 GW~\cite{pjm}. We apply this value to scale the system using a 20\% cold reserve margin for resource adequacy. Our demand timeframe is June 18\textsuperscript{th} through June 24\textsuperscript{th}, divided into 5-minute periods. PJM historical data is hourly, so we use linear interpolation to increase the resolution of the data.\par
Our unit selection includes 42 real representative PJM generators and amounts to 9047.9 MW capacity for the entire system. We fitted the piece-wise linear bids from PJM to quadratic cost curves. To scale up the problem, we used noisy copies of the original 42 generators. We create these by changing continuous generator up to $\pm10\%$ and runtime constraints $\pm 1$ hour randomly (clamping them to at least 1 and at most 24 hours).

To incorporate the runtime constraints, we augmented PJM generator data with runtime parameters summarized in \cref{tab:datarun}. PJM data had no clear assignment of units to gas or coal fuel, so we manually classified the units using the following rules: No unit larger than 1000 MW was considered gas because, according to EIA 860~\cite{eia8602024}, there were no operable gas units of this size in PJM territory. All units with a minimum runtime of 24 hours are assigned coal, unless their starting price in the supply curve is above \$10/MWh. The cost-based fuel assignment relies on the assumption that coal units have lower operating costs than gas units, although coal prices can be above gas prices on equivalent energy content~\cite{eiam}. 
Hot, warm, and cold start costs are available from PJM~\cite{pjm}. 
%We could not  find complete data about the conditions for each start cost. As we are optimizing over multiple periods, it is unlikely that a reasonable solver would turn a generator off if it needed to use it soon. 
We adopt cold start values for our model.

% Those may be subject to strategic bidding, so it was helpful to validate them against a different system, such as the EU~\cite{diw2013}. By normalizing warm and cold start costs by hot start cost, we obtain relative costs for coal and gas units that are broadly consistent: cold start is typically 1.4–2.3× the cost of hot start, and warm start is 1.1–1.6× the cost of hot start.

\begin{table}[htbp]
\caption{\label{tab:datarun} Runtime Assumptions and Data Sources}
\begin{tabular}{lrrr} \toprule
   Parameter & Source & Coal & Gas
\\ \midrule
Min runtime [min] & ~\cite{pjm} & 240-1440 & 0-1440
\\ Max daily starts & ~\cite{pjm} & 1-3 & 1-24
% \\ Hot start [min] & ~\cite{diw2013,GE} & 60-240 & 25-120
% \\ Warm start [min] & ~\cite{diw2013} & 120-480 & 60-240
\\ Cold start [min] & ~\cite{diw2013} & 360-720 & 120-300
% \\ Relative cost of start, warm/hot & ~\cite{pjm,diw2013} & 1.19-1.42 & 1.12-1.58
% \\ Relative cost of start, cold/hot & ~\cite{pjm,diw2013} & 1.74-1.93 & 1.35-2.25
\\ \bottomrule
\end{tabular}
\end{table}

We gathered the original ramping characteristic data  in \% of $P_{\max}$ per minute, and then converted to MW/min for input into the simulation. We compile the percentage values from the academic reports of Deutsches Institut für Wirtschaftsforschung \cite{diw2013} and the manufacturer catalog of GE Vernova~\cite{GE}. %Though they aggregate various sources, 
These values provide the upper bound of the physical ramp rates, as a survey of large thermal units suggests that in real systems power plants are often not able to deliver the nominal ramp rates specified for equipment~\cite{India}.

% We could not accurately match all  available generating units to the PJM set of generators on $P_{\min}$ and $P_{\max}$. Therefore 
We used approximations based on size and the assumed fuel used by a turbine (gas or coal steam). 
We assigned larger turbines and coal steam turbines  lower ramp values. First, we assigned the ramp percentages to specific generating units. Then, we multiplied $P_{\max}$ of those generators the ramp rate to get the MW/min value we used in the simulation.

% If the system starts with all generators off, \cref{eq:UnitCommitment} is usually infeasible. We initialized the system with the fastest ramping generators that can supply the previous demand. This may be suboptimal, but it is not the focus of this paper. Operators can initialize the system based on their own cold start protocol to a state that is known to be stable. If operators are warm starting using \cref{alg:ramp} they can use the current system state.

If the system starts with all generators off, \cref{eq:UnitCommitment} is infeasible. We initialized the system by solving a non-ramp-constrained UC instance and then treating the selected generators as if they had been on for the past 24 hours, for the purpose of runtime constraints. If operators are warm starting using \cref{alg:ramp} they can use the current system state.

\begin{table}[htb]
\caption{\label{tab:dataramp} Ramp Assumptions and Data Sources}
\begin{tabular}{lrrr} \toprule
   Parameter & Source & Coal & Gas
\\ \midrule
Ramp up rate [\%/min]  & ~\cite{diw2013}, ~\cite{GE} & 1-6 & 2-12
\\ Ramp down rate [\%/min] & ~\cite{diw2013} & 5 & 15
\\ \bottomrule
\end{tabular}
\end{table}

We implement our method in Julia~\cite{Bezanson2017Julia} with the MadNLP nonlinear programming solver~\cite{Pacaud2024ExaModels}, which built upon~\cite{Regev2022HyKKT, Swirydowicz2025GPU}, with the MA57 linear solver from HSL~\cite{ma57} and the JuMP modeling language~\cite{Dunning2017JuMP}. We use the Gurobi linear program solver~\cite{gurobi}. Our computational experiments use an Apple M5 MacBook with 10 cores and 24GB RAM.\par
We study a system with 462 units to determine the last term in \cref{eq:ObjUC}'s impact on the overall operation costs. Each 48-hour planning period contains 576 5-minute intervals, each with a load value. To ensure that we can meet this demand efficiently with the units selected for the current period, we include select representative demands in the last term of the objective function, \cref{eq:ObjUC}.\footnote{We select demand points using the nodes of the Clenshaw-Curtis quadrature rule~\cite{ClenshawCurtis1960}, mapping each node to its corresponding quantile within the sorted set of discrete 48-hour demand forecasts.}
Together, the values constitute a load probability distribution over a 48-hour forecast. We use a stochastic collocation method to approximate the probability distribution  with a set of discrete points~\cite{Nobile2008}.

\cref{fig:future} shows the cost reduction and runtime tradeoff using a serial implementation of RRUC. It is possible that parallelization would lead to further run time reductions. The results show an approximately \(27\%\) decrease in cost from from the 0 future point model (\textbf{FPM}) to 1-FPM, an approximately \(5\%\) lower cost when using 2-FPM and 3-FPM, and negligible advantage from more future points. Runtime scales linearly in the number of points used. This means that including the mean, minimum, and maximum projected demands in the planning horizon helps the solver reduce overall costs. Including more demands helps a little, but increases solve time a lot. Different applications may select a different tradeoff, but in light of these results, we use 3-FPM for the rest of the paper.

\begin{figure}[htbp]
\centering
\includegraphics[width=1.00\linewidth]{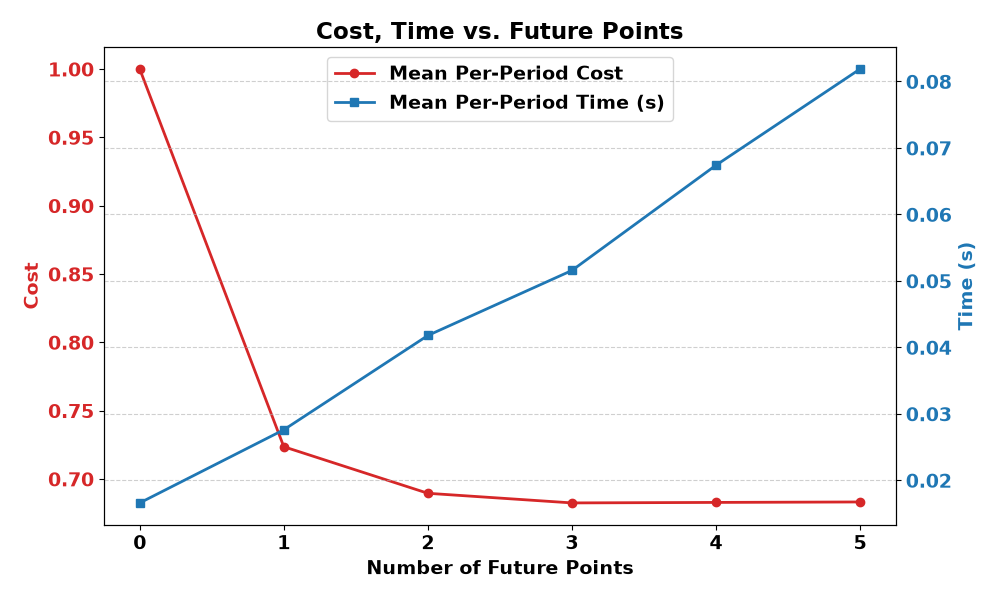}
\caption{\label{fig:future} RRUC's objective and runtime scaling with different FPMs on a 462-generator test case. Runtime increases roughly linearly and the objective levels off. This motivates taking a demand sample $D_\delta$ instead of all demands in \cref{eq:deltaD_UC}.}
\end{figure}

% \cref{fig:future} shows the cost reduction and runtime tradeoff. Cost decreases $13\%$ just by including the mean as $D_\delta$. Including the minimum and maximum as well decreases costs $17\%$ compared to the baseline. 4-FPM and 5-FPM decreased the objective by less than an additional $1\%$ compared to 3-FPM. Runtime increases roughly linearly in the number of points.
\begin{figure}[htbp]
\centering
\includegraphics[width=1.00\linewidth]{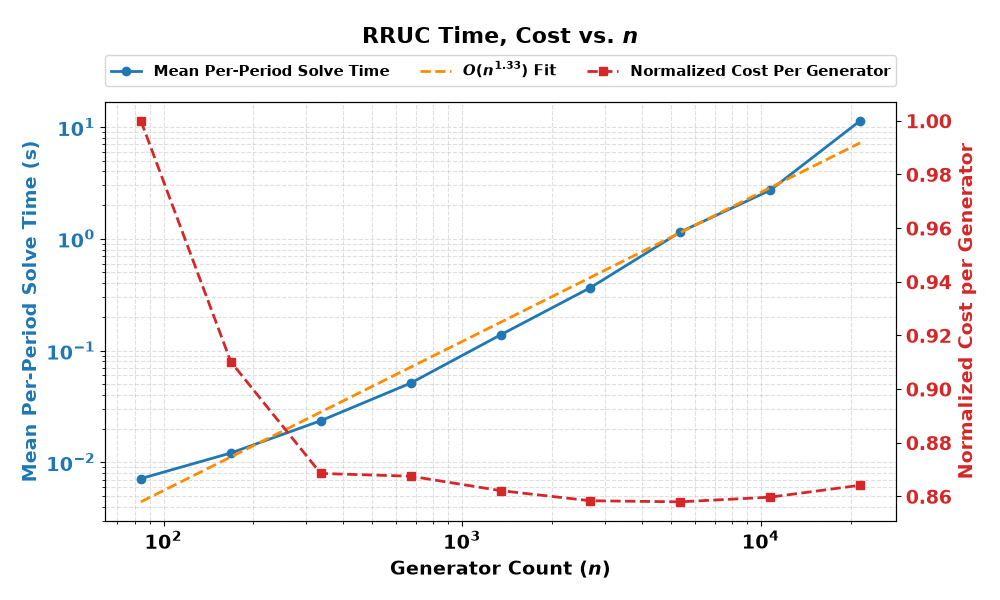}
\caption{\label{fig:3point} Objective and runtime scaling of the 3-FPM from 84 to 21,504 generators. The 3-FPM retains accuracy at scale. RRUC's runtime scales as $\approx O(n^{4/3})$, where \(n\) is the number of units. Cost values per-generator are normalized by the the smallest case (84 generators). Demand data is from PJM's historical data from June 18\textsuperscript{th} to June 24\textsuperscript{th}, 2025.}
\end{figure}

We run \cref{alg:ramp} with 3-FPM for 2016 time periods at 5 minute intervals (7 days) for June 18\textsuperscript{th} to 24\textsuperscript{th}, 2025, which covers the PJM historical peak. \cref{fig:3point} shows that 3-FPM retains accuracy at scale. Its normalized objective decreases slightly with an increase in system size.

RRUC's time complexity is dominated by the numerical linear algebra in the continuous optimization we use to solve the relaxed \cref{eq:UnitCommitment}. It is estimated to be $O(n^{1.5})$~\cite{morrison2016}. \cref{alg:ramp} meets and outperforms this scaling slightly. This may be because linear systems generated by \cref{eq:UnitCommitment} are sparser than typical optimization problems due to the $P_{\delta,i}$s which do not interact much with the other variables.\par
To benchmark our speed and accuracy, we compare our results with results obtained by using the Gurobi optimization software~\cite{gurobi} by replacing our solver (lines 1-14 in \cref{alg:ramp}) in our Julia code with a MIP solver from Gurobi's Julia package, using 10 threads for parallelism. As an example, we run with varying generator counts for 7 days each, with a 48-hour planning window. One can choose any planning window without increasing runtime due to FPM sampling. 

\begin{figure}[htbp]
    \centering
    \includegraphics[width=\linewidth]{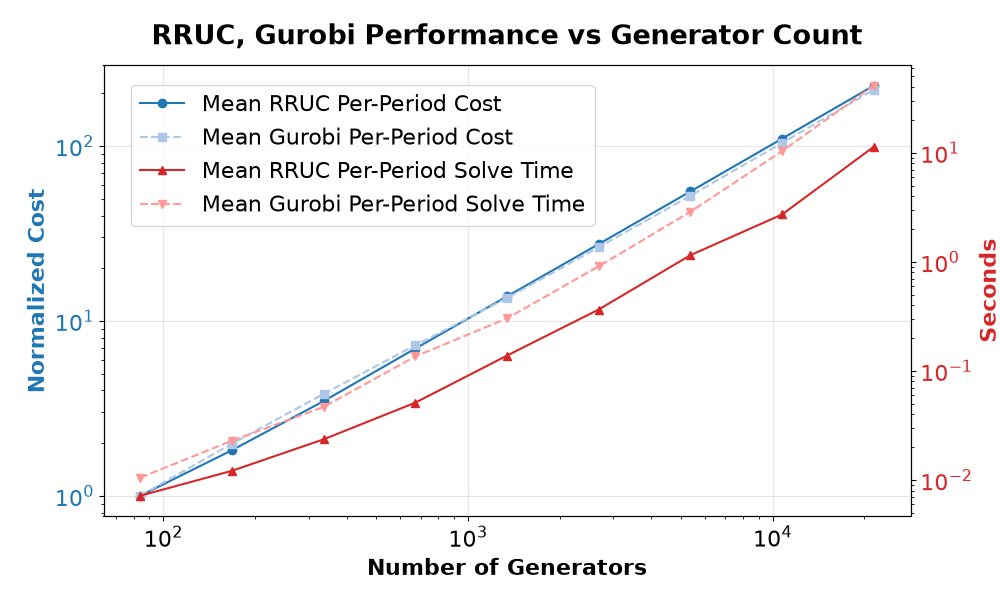}
    \caption{RRUC's solve time and normalized operation cost compared with Gurobi's with  84-21,504 generators. Values are averaged over 2 days (576 5-minute periods). Objective values are normalized relative to the smallest 84-generator case. The algorithms produce similar quality solutions, but RRUC is faster. This gap increases with more units.}
    \label{fig:gurobi comparison}
\end{figure}

\cref{fig:gurobi comparison} shows that Gurobi and RRUC produce numerically indistinguishable solutions, though RRUC is twice as fast on the largest problem tested. RRUC achieves this using one parallel process compared to Gurobi's 10. Both algorithms scale well in the number of variables, however, Gurobi's runtime scales exponentially in the number of constraints. For the largest problem tested, Gurobi took half the time on the 2-FPM that it did on the 3-FPM. Using the 5-FPM, Gurobi took almost four times as long as the 3-FPM. This leads us to believe that on AC transmission-constrained problems, where the number of constraints grows linearly in the problem size, RRUC will have a substantial advantage. We save this investigation for future work.

\section{Hydro-generator commitment}
\label{sec:hydro}

\subsection{Mathematical Formulation}
We now include hydro units in our UC. Our approach commits the hydro units and then traditional units, using \cref{alg:ramp} as in \cref{sec:multiperiod}. This decoupling is justified because hydro units recharge by natural water flow and cannot indefinitely hoard water due to the dam's capacity. So as long units maintain high water levels long term, there is little opportunity cost. Hydro units are fast ramping \cite{Kincic} and typically have a narrow efficient operation regime \cite{KongIlievSkjelbred2024LifetimeHydraulicTurbineCost}. 

Here, we make the simplifying assumptions that hydro units can only be on at max capacity or off, the starting water levels are far away from $0$ and the maximum dam capacity, and water evaporation rates are independent of water levels. We denote by $W_j([T])$ the amount of water projected to flow into reservoir $j$ over a certain time window $[T]\doteq\{1,\ldots,T\}$. We denote by $\tilde{W}_j([T])$ the water required for hydro unit $j$ to produce electricity in one time period. The number of time periods it can operate within window $[T]$ is $\pi_j \doteq \min(W_j([T])/\tilde{W}_r([T]), T)$. This reduces our problem to finding in which periods $t\in [T]$ to operate each unit.

Hydro generators not being able to run all (or even most of) the time means that the FPM approach (\cref{alg:ramp}) is not viable for committing them. They require optimizing over long-term, for example yearly, cycles. We therefore design a method to commit hydro generators based on shifting predicted demands. For this purpose, we introduce the notion of the marginal cost of the system.

The marginal cost of the system \cref{eq:UnitCommitment} is the cost difference to produce slightly more or less power. All on units $i$ not at the feasibility boundary must have the same marginal cost $u_i(2a_iP_i+b_i)$ at the optimal solution. Otherwise, shifting one unit's production up and one down could reduce cost. We define the marginal cost of the system therefore as $u_i(2a_iP_i+b_i)$ for one such of these units $i$.
Because $\forall i:a_i,b_i\geq 0$, a good solver for \cref{eq:UnitCommitment} should produce solutions where higher demand is positively correlated with higher prices. 

\cref{fig:marginal} shows how commonly certain demands coincide with certain marginal costs. A high demand always leads to high marginal costs and low demand always leads to low costs. There are small deviations in this trend due to periods that are less constrained by future generation requirements and ``free'' power from ramping generators. 

\begin{figure}[htbp]
\centering
\includegraphics[width=1.00\linewidth]{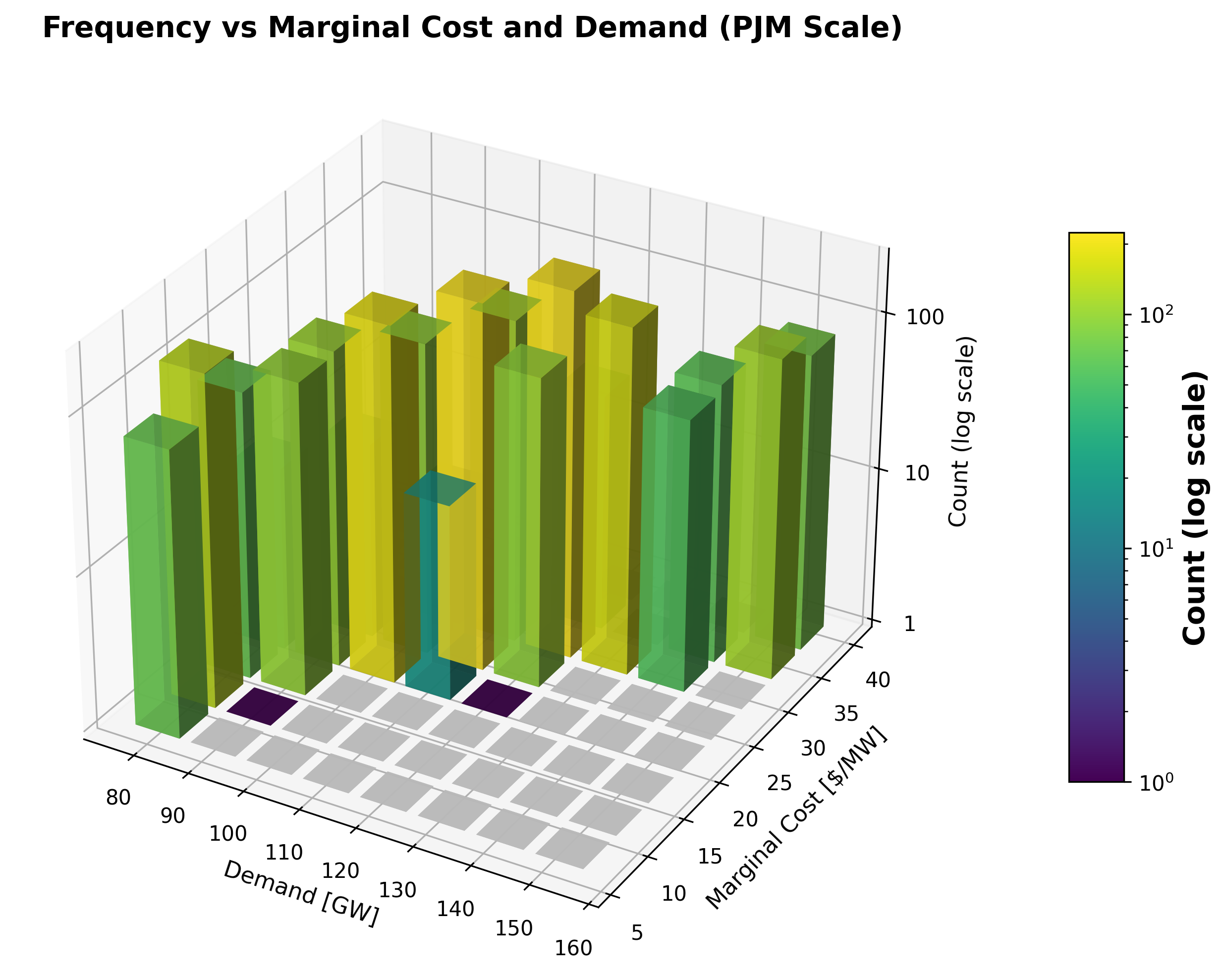}
\caption{\label{fig:marginal}Histogram of the frequency of demand-marginal cost pairs (counts) at the PJM scale. Marginal cost and demand are highly positively correlated. All rows and columns have at most 3 adjacent nonzero boxes, most have 2. This shows that outliers do not deviate much.}
\end{figure}
%The maximum available hydropower in any period is $\approx2.3$ GW, $\approx1.5\%$ of the highest demands, motivating increasing the fraction of hydropower on the electric grid to flatten out demand spikes. 

\begin{figure}[htbp]
\centering
\includegraphics[width=1.00\linewidth]{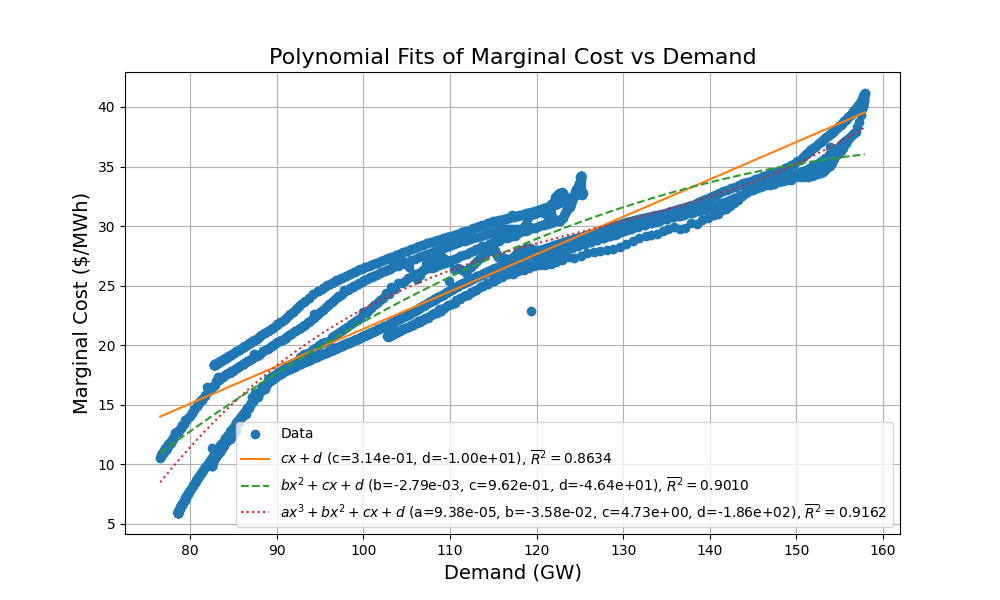}
\caption{\label{fig:marginal_fits} Polynomial fits of marginal cost as a function of demand. Marginal cost and demand are highly positively correlated, with a few outliers. The degree 3 polynomial has the largest adjusted R-squared ($\Bar{R}^2$) and is visually most similar to the data. The relative scale of the axes also suggests a cubic fit. The linear term is always dominant and positive. This suggests it is almost always better to shift net demand from high demand periods to low demand periods with hydropower or storage.}
\end{figure}

\cref{fig:marginal_fits} shows 1, 2, and 3 degree polynomial fits of the marginal cost to the demand, with their adjusted R-squared $\Bar{R}^2$. The $\approx 2\times$ difference between minimum and maximum demand to $\approx 8 \times$ difference between minimum and maximum marginal cost further motivates a cubic fit. A 4th degree increased $\Bar{R}^2$ less than $0.1\%$ and made no visual difference. 

The linear term is dominant in all the fits shown here. The linear term of the 4 degree fit has a non-dominant negative coefficient, a clear sign of over-fitting, based on visual inspection of the data. This fit motivates the heuristic of using hydropower to prefer reducing demand from high demand periods over low demand periods. It also shows that even if pumped storage (or any other battery) does not retain energy perfectly, i.e. it can discharge a smaller amount than it charges, it can still be well worth-while to shift demand from low to high demand periods.

These findings motivate using hydro generators to flatten load or peak shave. We define the demand remaining after hydro-generator commitment at time $t$ optimized over $[T]$ as $\Bar{D}_{t}=D_t-\sum_{j=1}^nu_{j,t}\kappa_j$, where $\kappa_j$ is the power hydro-generator $j$ can produce. Smoothing out $\Bar{D}_{t}$ will lead to lower overall costs, because the marginal cost is highest when demand is highest. This leads to the following optimization problem: 
\begin{subequations}\label{eq:HydroCommit}
\begin{align}
\min_{u_{j,t}} \quad &
\text{Var}\left(D_t-\sum_{j=1}^nu_{j,t}\kappa_j\right)
\label{eq:ObjHyd}
\\[6pt]
\text{s.t.} \quad &\forall 1\leq j\leq n: \sum_{t=1}^T u_{j,t} = \pi_j, u_{j,t}\in \{0,1\}
\label{eq:ConstHyd}
\end{align}
\end{subequations}
\begin{algorithm}[htbp]
\caption{Demand Balancing}
\label{alg:hydrocommit}
\begin{algorithmic}[1]
\Require Capacities $\kappa_j$, maximum operating periods $\pi_j$ for $j=1,\dots,n$; demands $d_t$ for $t=1,\dots,T$
\State $G_t \gets 0 \quad \forall t$ \Comment{Generated hydro-power at period \(t\)}
\State Initialize indexed max-heap $H$ with keys $k_t = D_t$
\State $\text{used}_t \gets 0 \; \forall t$ 
\Comment{Tracks assignments}

\State Re-index such that $\kappa_1 \sqrt{\pi_1}\geq\ldots \kappa_n\sqrt{\pi_n}$ \Comment{Improves numerical stability}
\For{\(j\gets 1\)\textbf{\ to\ }\(n\)}
    \State $\text{placed} \gets 0$
    \While{$\text{placed} < \pi_j$}
        \State $\tau \gets \mathrm{argmax}_{t} \; k_t$
        \If{$\text{used}_{\tau} = j$}
            \State Set $k_\tau \gets \infty$ and update $H$
        \Else
            \State $\text{used}_\tau \gets j$
            \State $\text{placed}\gets\text{placed}+1$,
            \State \(G_\tau\gets G_\tau + \kappa_j\)
            \State Update $k_\tau \gets D_\tau - G_\tau$ in $H$
        \EndIf
    \EndWhile
    \For{\(t\gets1\)\textbf{\ to\ }\(T\)}
        \If{$k_t = \infty$}
            \State Update $k_t \gets D_t - G_t$ in \(H\)
        \EndIf
    \EndFor
\EndFor
\State \Return $(L_t), (D_t - G_t), \mathrm{Var}(D_t - G_t)$
\end{algorithmic}
\end{algorithm}
Solving \cref{eq:HydroCommit} is NP-hard (see \cref{thm:hydro np-hard} in Appendix \ref{sec:AppendixB}). \cref{alg:hydrocommit} approximates a solution by ``greedily'' assigning a which still has remaining periods to the period with the highest remaining demand. This is equivalent to locally optimizing \cref{eq:HydroCommit}. \cref{alg:hydrocommit} runs in log-linear time (see \cref{thm:runtime_hydro} in Appendix \ref{sec:AppendixB}).

\subsection{Simulation and Results}
We use hydro unit data from the US Energy Information Administration (\textbf{EIA})~\cite{eia8602024}. From this set, we take all hydro units in PJM. In this work we consider all hydro-generators as reservoirs, as opposed to pumped storage, which we save for future work. We aggregate the nameplate capacities for each plant from\cite{eia8602024}. The capacity factor is the percentage of time a hydro-generator can operate. 

In the contiguous US, the median capacity factor is $36-39\%$, with large variance~\cite{Uria-Martinez2021USHydropowerMarket}. In the absence of more granular data, we assign different capacity factors between $28$ and $47\%$ to the generators to test \cref{alg:hydrocommit}. This gives us a base set of $56$ hydro-generators for PJM's demand.

\begin{figure}[htbp]
\centering
\includegraphics[width=1.00\linewidth]{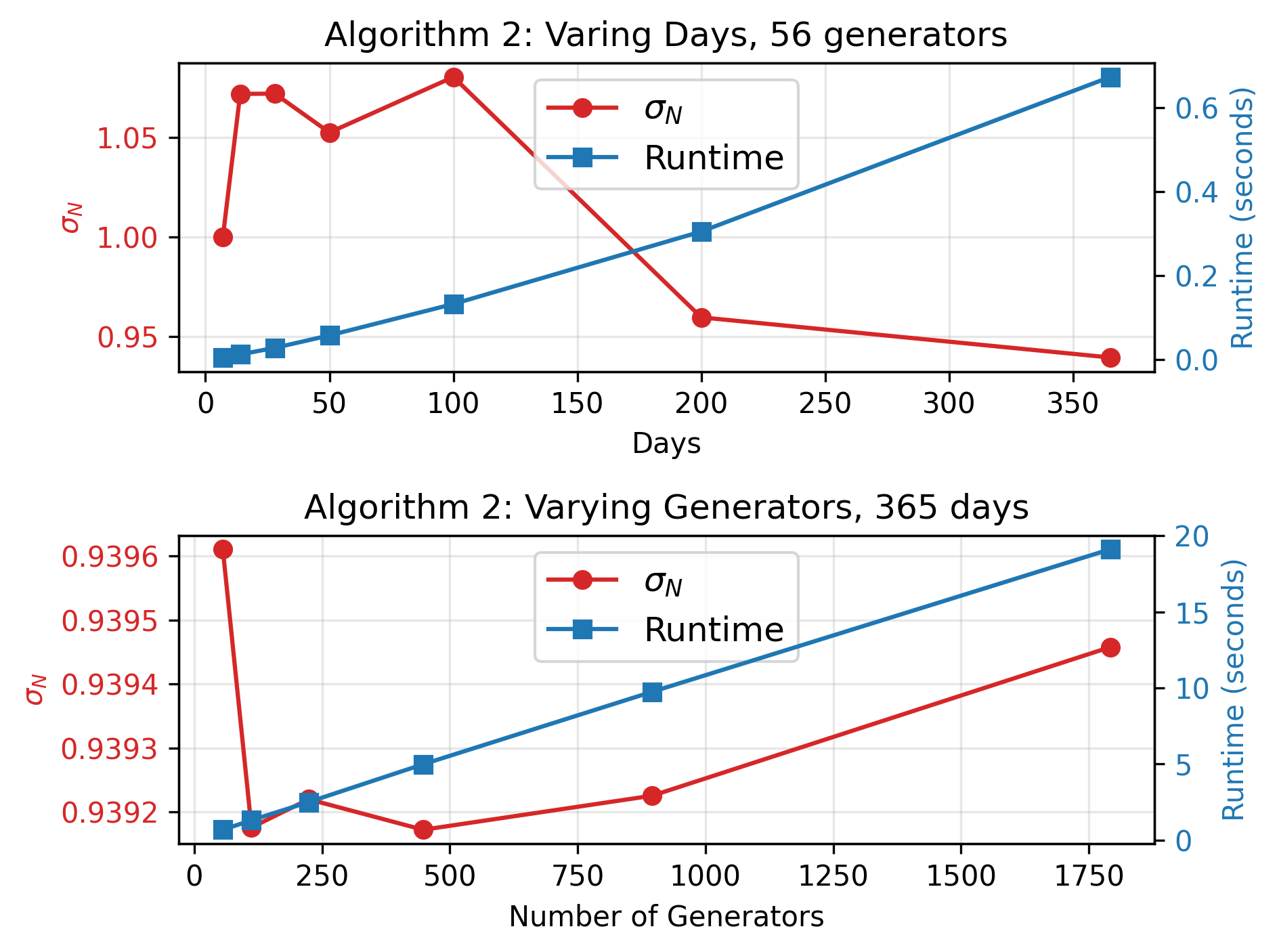}
\caption{\label{fig:hydro_commit} \cref{alg:hydrocommit}'s normalized objective and runtime varying the number of days and generators. Its performance is stable as the demand (and number of generators to supply it) increase and in optimizing over longer stretches. Its runtime increases linearly in the number of generators, and slightly faster than linearly in the number of days (or periods) as \cref{thm:runtime_hydro} guarantees.}
\end{figure}

For our scaling experiments, we multiply the total demand by a factor $N$, and create $N-1$ noisy copies of the generators to commit. The copies have a capacity that shifted up to $\pm 10\%$ from the original generator, and can operate up to $\pm 10$ periods within a $24$ hour time window compared to the original generator. \cref{fig:hydro_commit} shows normalized $\sigma_N\propto \sqrt{\text{Var}(\Bar{D}_\mathcal{T})}/m$, where $m$ is the number of generators. $\sigma_N$ is normalized so that with $56$ generators allocated over 7 days, $\sigma_N=1$.  

\cref{alg:hydrocommit} retains accuracy in its objective and its run time scales almost linearly. It can commit 1792 generators for an entire year in 20 seconds. This is useful because rainfall follows yearly cycles. So it can easily run at 5-minute intervals to keep up with evolving demand prediction and dam water levels (which impact how many periods within the year they can operate).

We compare our solver for \cref{eq:HydroCommit} to the commercial integer program solver Hexaly, which is faster than Gurobi for scheduling problems with many constraints such as this one~\cite{hexaly_gurobi_2026}, in \cref{fig:hydro_compare}. Hexaly consistently gets marginally worse solutions, but is still within $10^{-4}$ relative error of the known lower bound (our prescribed tolerance), and takes much longer. Hexaly reported timeout errors (even before it met the time limit) for problems larger than in \cref{fig:hydro_compare}.
\begin{figure}[htbp]
\centering
\includegraphics[width=1.00\linewidth]{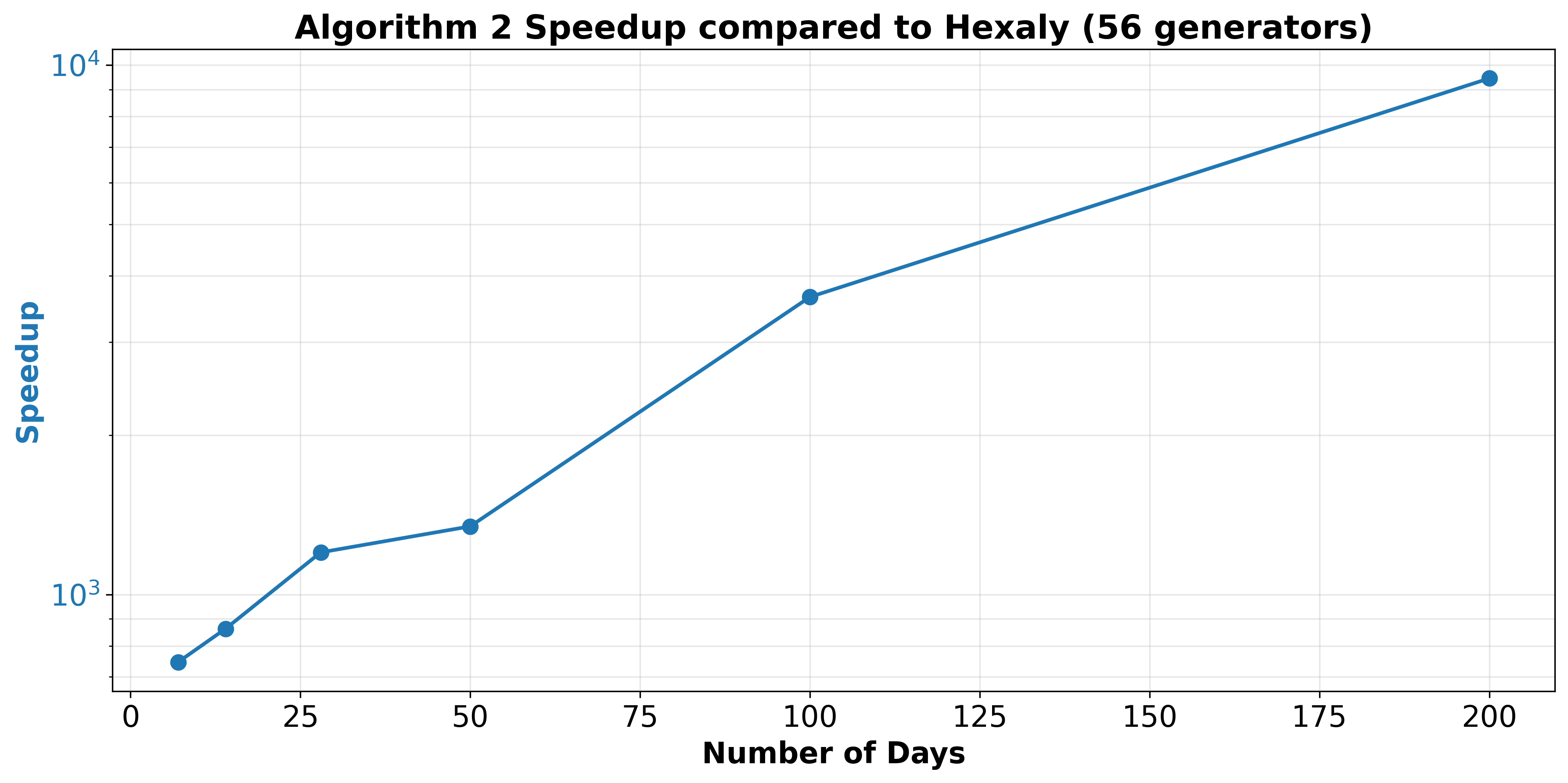}
\caption{\label{fig:hydro_compare} \cref{alg:hydrocommit} speedup compared to Hexaly integer program solver for committing 56 generators for different time horizons.  \cref{alg:hydrocommit} always gets slightly better solutions, though both are within tolerance of optimal. It gets much faster solutions, a gap that increases roughly linearly in the problem size. It was almost $10,000$ times faster on the largest problem Hexaly could run, which was about 60 times smaller than the largest problem we tested \cref{alg:hydrocommit} on.} 
\end{figure}

Our solutions had relative error of about $10^{-7}$ from the lower bounds to the optimal solution given by Hexaly for the smallest problems we tested. The errors shrank further for larger problems. \cref{fig:hydro_compare} shows that our greedy algorithm produces almost indistinguishable solutions from an exact algorithm, though it is much faster.
% \section{Impact of Hydro-generators on RRUC}
% \label{sec:combine}
% \begin{figure}[htbp]
% \centering
% \includegraphics[width=1.00\linewidth]
% {figs_combined/hydro_comparison_analysis.png}
% \caption{\label{fig:hydro_comp} Normalized objective and runtime of \cref{alg:ramp} after running \cref{alg:hydrocommit} with comparison to just \cref{alg:ramp}. (Top) \cref{alg:ramp} maintains accuracy and scales well. (Bottom) Hydro-generation  lowers costs more than $3\times$ its relative part of generation.}
% \end{figure}

% We now apply \cref{alg:hydrocommit} to our original 7 day problem from \cref{sec:multiperiod} to pre-assign hydro units to periods with large demand, thereby reducing the remaining demand in those periods. We then use \cref{alg:ramp} on the remaining demand, with 3-FPM for $D_\delta$ in \cref{eq:deltaD_UC}. \cref{fig:hydro_comp} shows that \cref{alg:ramp} produces results that don't deteriorate with the problem size and scale well. Additionally, for the larger problems, adding hydro units with just $0.51\%$ capacity of the entire system decreased  overall operations costs more than $1.5\%$, showing the outsized value of hydro units.

\section{Conclusions and Further Work}
\label{sec:summary}
We extended the RRUC formulation from our previous work \cite{regev2026oneperiod} to \cref{eq:UnitCommitment}, which incorporates mechanical constraints and future cost considerations into UC. We consider representative demands, which we show capture the distribution well. This keeps runtime tractable, even when we increase the time granularity of the problem.
We show this formulation maintains accuracy as the problem size increases and exhibits $\approx O(n^{4/3})$ runtime scaling.
We introduce a provably scalable algorithm to commit hydro-generators. We verify the algorithm by checking its  objective and scaling of in practice.

In future research, we intend to incorporate transmission constraints in UC. Using the insights of \cref{fig:marginal_fits}, we will incorporate batteries, including hydro-storage, in our formulation.
We will attempt to leverage parallelism, including GPUs, to accelerate our solver. Finally, we will further examine RRUC's speed-accuracy trade-offs.
\section*{Acknowledgements}
We thank Nicholson Koukpaizan for improving the manuscript with his review. We thank James Nutaro for helping write the proposal that funded this work.
\begin{appendices}
\section{}
\label{sec:AppendixA}

We show that \cref{alg:ramp} is asymptotically optimal for solving \cref{eq:UnitCommitment}.

\begin{theorem}
    \label{thm: approximation ratio}
    In \cref{eq:UnitCommitment}, set \(\gamma = 0\). Consider a relaxed instance of \cref{eq:UnitCommitment} where \(u_i\in\{0,1\}\leftrightarrow y_i\in[0,1]\). Suppose the continuous stage of \cref{alg:ramp} solves this relaxed instance optimally, and let \(y^*_i\), for \(1\leq i\leq n\), be optimal values of the \(y_i\) variables. Let \(P_i^*\) be optimal values of the \(P_i\) variables.\par
    Suppose there is a feasible solution to \cref{eq:UnitCommitment} such that \(P_i=P_i^*\) and
    $
        \left\{y^*_i = 1\right\}
        \subseteq
        \left\{u_i=1\right\}
        \subseteq
        \left\{y^*_i > 0\right\}
    $.
    That is, \(\{u_i\}\) values are rounded from \(\{y^*_i\}\). Then, let \(\ALG\) be the objective value of the solution to \cref{eq:UnitCommitment} produced by \cref{alg:ramp}. We have
    \[
        \ALG \leq {\OPT} + (3 + s)C_{\max}.
    \] with \(\OPT\) the optimal objective value of the (non-relaxed) instance, \(s\) is the number of considered demand levels \(\delta\), and \(C_{\max}\) is the maximum possible coefficient of any \(u_i\) in \cref{eq:ObjUC}. We formally define  
    \[
        C_{\max} \doteq \max_{i}\left\{
            \max\left(
                a_iP_{\star,i}^2 + b_iP_{\star,i} + c_i,
                K_i
            \right)
        \right\},
    \]
    \[P_{\star,i}\doteq\min (P_{\max,i}, P_{\min,i}+r_{u,i}+r_{d,i}).\] 
    % This is because any unit producing more than $P_{\min,i}+r_{d,i}$ is not discretionary, because it cannot ramp down to $P_{\min,i}$ in one step. A unit starting at $P_{i,t-1}\leq P_{\min,i}+r_{d,i}$ can ramp up to $P_i\leq P_{\star,i}$. \(C_{\max}\) is the maximum possible coefficient of any \(u_i\) in \cref{eq:ObjUC}.
\end{theorem}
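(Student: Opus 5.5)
The argument is a sandwich: the relaxation gives a lower bound on $\OPT$, and the rounded solution assumed to exist in the statement is one of the candidates Alg.~\ref{alg:ramp} examines. Concretely, let $\mathrm{REL}$ be the optimal value of the relaxed instance, attained at $(y^*,P^*)$. Every feasible integer solution is feasible for the relaxation, so $\mathrm{REL}\le\OPT$. It then suffices to show two things: that the algorithm's output costs no more than the cost $\Phi(u,P^*)$ of some rounded solution $u$ with $\{y^*_i=1\}\subseteq\{u_i=1\}\subseteq\{y^*_i>0\}$, and that $\Phi(u,P^*)-\mathrm{REL}\le(3+s)C_{\max}$.

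\emph{Step 1 (the algorithm dominates a structured rounding).} After line 4 the generators are sorted so that $y^*_1\ge\dots\ge y^*_n$. The loop then ranges over the prefix sets $\{1,\dots,k\}$ with $k$ between $\max(\ell,\ell')$ and $\min(h,h')$. Here $\ell'$ marks the end of the block with $y^*=1$ and $h'$ the end of the block with $y^*>0$, so every prefix in this range satisfies the sandwich condition. The indices $\ell,h$ encode the constraints \eqref{eq:Max_Const_UC} and \eqref{eq:Min_Const_UC}. For each prefix the algorithm solves \eqref{eq:OptimizationEco} optimally and adds the switching costs, so its cost is at most $\Phi(u,P)$ for any feasible $P$ on that prefix. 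I will use the feasibility hypothesis to argue that some such prefix $k$ is feasible together with $P=P^*$. If the hypothesised feasible set is not itself a prefix, I will exchange it for the prefix of the same size, or of matching capacity, in sorted order. I expect this exchange to be the main obstacle. I would handle it with an interchange argument: both sets contain $\{y^*=1\}$ and lie inside $\{y^*>0\}$, and moving a unit from fractional $y^*_i$ to $1$ or $0$ changes at most one $u$-coefficient term. So any error the exchange introduces can be charged to a constant number of $C_{\max}$ terms. With $\gamma=0$ the future terms vanish from the objective, and \eqref{eq:deltaD_UC} holds by construction as noted after \eqref{eq:findmax}.

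\emph{Step 2 (charging the rounding error).} With $P=P^*$ fixed, $\Phi$ is affine in $u$. Each coefficient is either $(a_iP_i^2+b_iP_i+c_i)u_{t-1,i}$ or $\pm K_i$. A discretionary unit can only be switched if $P_{t-1,i}\le P_{\min,i}+r_{d,i}$, so $P_i\le P_{\star,i}$ and every coefficient is bounded by $C_{\max}$. Hence $\Phi(u,P^*)-\mathrm{REL}=\sum_i c_i(u_i-y^*_i)$, and only the fractional units contribute. I will bound the number of fractional $y^*_i$ by a vertex argument. Line 3 fixes all variables except $y$ and re-solves an LP, so $y^*$ may be taken to be a basic optimal solution of that LP. Its nontrivial constraints are the demand constraint \eqref{eq:Demand_Const_UC}, \eqref{eq:Max_Const_UC}, \eqref{eq:Min_Const_UC}, and the $s$ constraints \eqref{eq:deltaD_UC}, which are kept even when $\gamma=0$. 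A basic solution has at most $3+s$ components strictly between $0$ and $1$. Each fractional unit changes the objective by at most $|c_i|\,|u_i-y^*_i|\le C_{\max}$, which gives $\ALG\le\Phi\le\mathrm{REL}+(3+s)C_{\max}\le\OPT+(3+s)C_{\max}$.

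The vertex-counting step is routine. The delicate points are, first, making sure the prefix choice in Step 1 costs nothing beyond these $3+s$ fractional terms, ideally by showing that the hypothesised rounding differs from the chosen prefix only on fractional units. Second, one must check that the relaxed problem's objective at $y^*$ coincides with the linear form used above once $P^*$ is fixed. The products $u_iu_{t-1,i}$ are linear because $u_{t-1,i}$ is data.
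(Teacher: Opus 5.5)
Your skeleton matches the paper's:
\begin{itemize}
\item $\mathrm{REL}\le\OPT$;
\item at most $3+s$ fractional $y^*_i$, by a basic-solution count;
\item a charge of at most $C_{\max}$ per fractional unit;
\item economic dispatch on the rounded set costs no more than $P^*$ does;
\item the algorithm takes the minimum over its candidates.
\end{itemize}
Your Step~2 is, if anything, more careful than the paper's. You bound each fractional term by the absolute value of its coefficient times $|u_i-y^*_i|$. That survives the possibly negative coefficient $a_iP_i^{*2}+b_iP^*_i+c_i-K_i$ of a previously-on unit. The paper instead claims the objective is non-decreasing in $u_i$, which fails when $K_i$ is large. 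Using line~3 to obtain a basic $y^*$ also properly justifies the vertex count.

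The genuine gap is Step~1, exactly where you depart from the paper. Replacing the hypothesised rounding by a prefix of the $y^*$-order cannot work in general. The cost side is harmless: every prefix with $\ell'\le k\le h'$ is itself a sandwich rounding, so Step~2 already bounds it. Feasibility is the problem, because the order of the fractional $y^*_i$ carries no information about $P_{\max,i}$, $P_{\min,i}$ or $P^*_i$.

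For example, take two fractional units $A,B$. After the integral block, the residual requirements are $10$ in capacity and an allowance of $6$ in minimum output. Let $P_{\max,A}=9$, $P_{\min,A}=6$, $P_{\max,B}=10$, $P_{\min,B}=5$, with objective coefficients $1$ and $2$ and all other constraints slack. The $y$-LP then has the unique optimal vertex $(y_A,y_B)=(2/3,2/5)$, so $A$ is ranked first. Committing $B$ alone is feasible. However, the prefixes ending before $A$ and at $A$ violate \eqref{eq:Max_Const_UC}, and the prefix containing both violates \eqref{eq:Min_Const_UC}. Hence $\ell>h$ and the loop of \cref{alg:ramp} is empty.

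Even when only the demand constraint breaks, restoring it means re-dispatching other, possibly must-run, units whose output is not capped by $P_{\star,i}$. That cost is not chargeable to $C_{\max}$ either.

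The paper never attempts this exchange. Its proof simply takes the hypothesised rounding $R$ to be among the candidates on which \cref{alg:ramp} runs economic dispatch (``it runs economic dispatch on all roundings''). You need to make that assumption explicit, in one of two ways:
\begin{itemize}
\item let the algorithm enumerate all sandwich roundings; or
\item require the feasible rounding to be a prefix with $\max(\ell,\ell')\le k\le\min(h,h')$.
\end{itemize}
Then Step~1 takes one line: $P^*$ is feasible for the dispatch, so the dispatch cost is at most $\Phi(R,P^*)$, and the rest of your argument goes through.
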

The definition of \(P_{\star,i}\) comes from the fact that any unit producing more than $P_{\min,i}+r_{d,i}$ is not discretionary, because it cannot ramp down to $P_{\min,i}$ in one step. A unit starting at $P_{i,t-1}\leq P_{\min,i}+r_{d,i}$ can ramp up to $P_i\leq P_{\star,i}$. 
\(C_{\max}\)'s definition is meant to upper bound the coefficient of any \(u_i\) in \cref{eq:ObjUC}, which we will use in the proof of \cref{thm: approximation ratio}.
\begin{proof}
    First observe that, in a relaxed instance, when the non-integer decision variables are fixed, the problem is a linear program with decision variables \(0\leq y_i\leq 1\). We use this fact to bound the maximum number of fractional variables \(0 < y_i^* < 1\) in an optimal solution.\par
    Observe that \cref{eq:UnitCommitment} has \(3 + s\) constraints that are linear in \(u_i\). Applying a standard result in linear programming theory (see, for example, Section 17.2 of \cite{Vazirani2010approx}), there are at least \(n - (3 + s)\) of the variables \(y_i^*\) such that \(y_i^*\in\{0,1\}\). Therefore, there are at most \(3 + s\) of the variables \(y_i^*\) such that \(0 < y_i^* < 1\).\par
    Let \(\OPT_C\) be the minimum objective value of the relaxed instance. Note that \(\OPT_C \leq \OPT\). Note also that the (perhaps infeasible) solution generated by setting \(u_i = 1\) if and only if \(y_i^* = 1\) has an objective value at most \(\OPT_C\), since the objective is non-decreasing in \(u_i\). It follows that any feasible solution generated from rounding the \(3 + s\) fractional \(y_i^*\) values has objective value at most \(\OPT_C + (3 + s)C_{\max}\), since each fractional \(y_i^*\) value that is rounded up can contribute at most \(C_{\max}\) to the final objective value.\par
    Let \(R\) be the feasible solution assumed to exist by the theorem, and let \(u_i^{R}\) be its selection values. Recalling that we assume \(\gamma = 0\), \(R\) will then have objective value
    \begin{equation}\label{eq:ObjBeforeED}
    \begin{aligned}
        &\sum_{i} \left(a_i\left(P_i^*\right)^2 + b_iP_i^* + c_i\right)u_i^Ru_{t-1,i}\\
        +&\sum_{i}K_i\left(u_{t-1,i}\left(1-u_i^R\right)+(1-u_{t-1,i})u_i^R\right)
    \end{aligned}
    \end{equation}
    Recall that \cref{eq:ObjBeforeED} \(\leq\OPT + (3 + s)C_{\max}\). \cref{alg:ramp} will run economic dispatch on rounding \(R\) (since it runs economic dispatch on all roundings), and the resulting solution will be
    \begin{equation}\label{eq:ObjAfterED}
    \begin{aligned}
        &\sum_{i} \left(a_iP_i'^2 + b_iP_i' + c_i\right)u_i^Ru_{t-1,i}\\
        +&\sum_{i}K_i\left(u_{t-1,i}\left(1-u_i^R\right)+(1-u_{t-1,i})u_i^R\right)
    \end{aligned}
    \end{equation}
    where \(P_i'\) are the generation levels set by economic dispatch. Furthermore, since the values \(P_i^*\) are feasible for \cref{eq:UnitCommitment}, they are feasible for \cref{eq:OptimizationEco}, and so therefore the first sum of (\ref{eq:ObjAfterED}) is at most the first sum of (\ref{eq:ObjBeforeED}), since these sums are the objective function minimized by \cref{eq:OptimizationEco}. The second sum is identical between (\ref{eq:ObjBeforeED}) and (\ref{eq:ObjAfterED}), so (\ref{eq:ObjAfterED}) is at most (\ref{eq:ObjBeforeED}).\par
    \cref{alg:ramp} chooses the solution which minimizes the objective value, so \(\ALG\) is at most (\ref{eq:ObjAfterED}). Combining inequalities 
    \begin{align*}
        &\ALG \leq (\ref{eq:ObjAfterED})\leq (\ref{eq:ObjBeforeED})\\
        &\leq \OPT_C + (3 + s)C_{\max}\leq \OPT + (3 + s)C_{\max}
    \end{align*}
    as desired.
\end{proof}

\begin{corollary}
    \label{thm: relative error}
    Under the conditions of \cref{thm: approximation ratio}, and supposing that \(C_{\max}\) is fixed and \(0 < \OPT \propto n\), 
    \[
        \frac{\ALG-\OPT}{\OPT} \xrightarrow[n\to\infty]{} 0
    \]
\end{corollary}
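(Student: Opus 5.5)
We derive the corollary directly from \cref{thm: approximation ratio}, by dividing its additive bound by \(\OPT\) and using that \(\OPT\) grows linearly in \(n\) while the additive error does not.

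\textbf{Step 1 (lower bound).} First we record that \(\ALG \geq \OPT\). \cref{alg:ramp} returns a feasible solution of the non-relaxed instance of \cref{eq:UnitCommitment}, and \(\OPT\) is the minimum over all feasible solutions. Hence
\[
\frac{\ALG-\OPT}{\OPT}\geq 0.
\]

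\textbf{Step 2 (upper bound).} Under the hypotheses of \cref{thm: approximation ratio}, we have \(\ALG-\OPT\leq (3+s)C_{\max}\). Since \(\OPT>0\), dividing gives
\[
0\leq \frac{\ALG-\OPT}{\OPT}\leq \frac{(3+s)C_{\max}}{\OPT}.
\]

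\textbf{Step 3 (limit).} We make the assumption \(\OPT\propto n\) concrete as \(\OPT\geq cn\) for some constant \(c>0\) independent of \(n\). With \(C_{\max}\) fixed, the right-hand side is at most \((3+s)C_{\max}/(cn)\). This tends to \(0\), and squeezing completes the argument.

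\textbf{Main obstacle.} The delicate point is that the bound depends on \(s\), the number of demand levels \(\delta\). The argument needs \(s\) to stay bounded, or at least \(s=o(n)\), as \(n\to\infty\). I would state explicitly that \(s\) is a fixed modeling choice independent of \(n\), which matches the paper's practice of using the 3-FPM, so \(s=3\). If instead \(s\) is allowed to grow, the same squeeze still works provided \(s/n\to 0\).

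A related subtlety is that the corollary is stated as a limit over a sequence of instances indexed by \(n\). I would make clear that the hypotheses of \cref{thm: approximation ratio} (the existence of the rounded feasible solution, and \(\gamma=0\)) must hold for every instance in this sequence.
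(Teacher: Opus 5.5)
Your proposal is correct and follows the paper's proof: divide the additive bound $(3+s)C_{\max}$ from \cref{thm: approximation ratio} by $\OPT$, then use $\OPT\propto n$ with $C_{\max}$ fixed. Your two additions make explicit points the paper leaves implicit: the lower bound $\ALG\geq\OPT$, which turns the paper's one-sided estimate into a genuine squeeze, and the requirement that $s$ stay bounded (or be $o(n)$).
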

\vspace{0.25cm}
\begin{proof}
    By \cref{thm: approximation ratio},

    \[  
        \frac{\ALG-\OPT}{\OPT} \leq \frac{(3 + s)C_{\max}}{\OPT} \xrightarrow[n\to\infty]{} 0
    \]
    where in the last step we used that \(\OPT \propto n\) and that \(C_{\max}\) is constant with respect to \(n\).
\end{proof}

Recall that we set $\gamma=0$ in the proof. \cref{fig:future} shows that $\gamma>0$ gives better results over multiple time periods (though they may be worse in the current period). This is because we are optimizing which units operate over multiple periods, but the capacity to operate them only over the current period, because we can more easily change it between periods.

In practice, \cref{alg:ramp} produces solutions, on average, with an upper bound on the relative error \(\frac{\ALG-\OPT_C}{\OPT_C}\) well below the theoretical bound, as shown in \cref{fig:rel_error__vs_n}. The last few points have an average upper bound of 0 up to solver tolerance. They are assigned  a small constant so they are visible on the log scale. 

% \cref{fig:tightest_error__vs_n} experimentally shows that \(\min_{t \in \mathcal{T}}\left((3+s)C_{\max} - \frac{\ALG_t-\OPT_{C,t}}{\OPT_{C,t}}\right)\) \(> 0 \), where \(\ALG_t\) and \(\OPT_{C,t}\) are \cref{alg:ramp}'s solution and the optimal continuous solution at period \(t\), respectively. This shows that even in the worst case within our data set, \cref{alg:ramp} outperforms the theoretical bound.

\begin{figure}[htbp]
    \centering
    \includegraphics[width=1\linewidth]{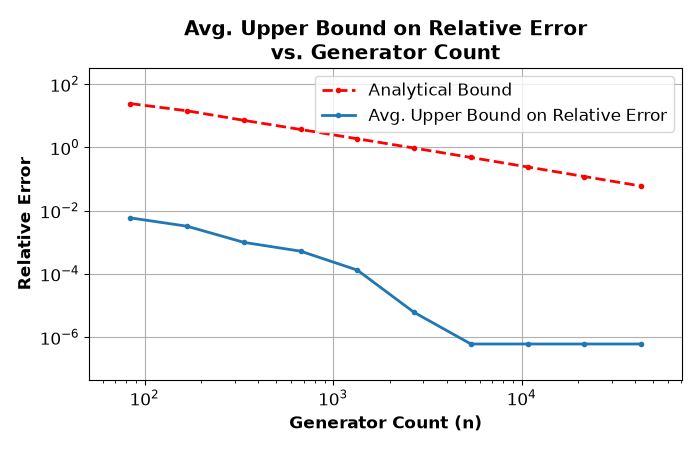}
    \caption{Average upper bound on the relative error from the solution to the relaxed (continuous) \cref{eq:UnitCommitment} and the analytical bound over all periods vs. number of generators. The true error is below both lines.}
    \label{fig:rel_error__vs_n}
\end{figure}

% \begin{figure}[htbp]
%     \centering
%     \includegraphics[width=1\linewidth]{rruc_figs/tightest_bound_vs_n.png}
%     \caption{Relative error vs. number of generators at the period when the relative error is tightest to the analytical bound. Even in the worst case, \cref{alg:ramp} outperforms the analytical bound.}
%     \label{fig:tightest_error__vs_n}
% \end{figure}

\section{}
\label{sec:AppendixB}

\begin{theorem}
    Solving \cref{eq:HydroCommit} is NP-hard.
    \label{thm:hydro np-hard}
\end{theorem}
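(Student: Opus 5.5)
We will prove NP-hardness of \cref{eq:HydroCommit} by a polynomial reduction from \textsc{Partition}, which is NP-complete in the weak sense. Note that the capacities \(\kappa_j\) are given in binary, so weak NP-hardness is exactly what is needed.

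Given positive integers \(a_1,\ldots,a_n\) with \(\sum_j a_j = 2A\), we build an instance with \(n\) hydro units, \(\kappa_j = a_j\) and \(\pi_j = 1\), over a window of \(T=2\) periods with constant demand \(D_1=D_2=2A\). Each unit must then be assigned to exactly one of the two periods. Write \(S\subseteq\{1,\ldots,n\}\) for the set of units placed in period \(1\). The residual demands are \(\Bar D_1 = 2A-\sum_{j\in S}a_j\) and \(\Bar D_2 = 2A-\sum_{j\notin S}a_j\).

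The variance of two numbers is \(\tfrac14(\Bar D_1-\Bar D_2)^2\), so
\[
\text{Var} = \tfrac14\Bigl(\sum_{j\notin S}a_j-\sum_{j\in S}a_j\Bigr)^2 .
\]
This is zero if and only if \(S\) is a perfect partition. Hence the optimal value of \cref{eq:HydroCommit} is \(0\) exactly when the \textsc{Partition} instance is a yes-instance. An exact solver for \cref{eq:HydroCommit} therefore decides \textsc{Partition}, and the construction is clearly polynomial in the input size.

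The only delicate point is the set of modelling assumptions in the paper. First, \(\pi_j\) is defined as \(\min(W_j([T])/\tilde W_j([T]),T)\), so we must check that any integer \(1\le\pi_j\le T\) is realisable. We do this by choosing the inflow \(W_j\) equal to \(\tilde W_j\). Second, the constraint in \cref{eq:ConstHyd} is an equality, which is exactly what forces each unit into one period.

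If a reviewer objects that \(T=2\) is degenerate, we will pad the instance. We use larger \(T\) with extra periods of very high demand \(M\) paired with dummy units of capacity \(M-2A\) that can run in every padding period. Alternatively, we can argue directly from the decision version ``is the optimum \(\le 0\)'', which already suffices.

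We expect the main obstacle to be only this formal realisability check, not the combinatorics. If stronger hardness is desired, the same template with \(T=m\) periods, \(\pi_j=1\) and equal demands reduces from \textsc{3-Partition*}. Minimising the variance is then equivalent to equalising the \(m\) bin loads, which gives strong NP-hardness.
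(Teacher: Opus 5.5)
Your proposal is correct and is essentially the paper's proof. Both use the same reduction from Partition with $T=2$, $\pi_j=1$, $\kappa_j$ equal to the item sizes and $D_1=D_2$, and both observe that the variance of the two residual demands is zero exactly when the hydro output is split evenly. (A small caveat on your optional padding aside: the dummy capacity should be $M-A$ rather than $M-2A$, so that padding residuals match the balanced value $A$; otherwise a yes-instance no longer has optimum $0$, and you would have to test against a nonzero threshold.)
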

\begin{proof}
    We reduce from the partition problem, which is NP-complete \cite{Karp1972}. Given a vector \((c_1,\ldots,c_n)\in\mathbb{Z}^n\), the partition problem asks whether there exists \(I\subseteq\{1,\ldots,n\}\) satisfying
   \begin{equation}
           \sum_{i\in I}c_i = \sum_{i\notin I}c_i
        \label{eq:partition}   
   \end{equation}
    Without loss of generality, take \(c_i\geq 0\) for all \(i\).\par
    Given an instance of the partition problem \cref{eq:partition}, consider an instance of \cref{eq:HydroCommit} where \(T=2\), there are \(n\) generators, and \(\kappa_j=c_j, \pi_j = 1\) for all \(j\). Set \(D_1 = D_2\) arbitrarily large. For this instance of \cref{eq:HydroCommit}, \(\OPT = 0\) if and only if \(D_1 - \sum_i\kappa_iu_{i,1} = D_2 - \sum_i\kappa_iu_{i,2}\) if and only if \(\sum_i\kappa_iu_{i,1} = \sum_{i}\kappa_iu_{i,2}\). This is possible if and only if there exists a set \(I\) satisfying the condition of the partition problem.
\end{proof}
\begin{theorem}
\label{thm:runtime_hydro}
\cref{alg:hydrocommit} takes $O(n(\log n +T\log T))$ time, where $n$ is the number of hydro units and $T$ is the number of periods over which to optimize.
\end{theorem}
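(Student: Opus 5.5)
We bound the cost of \cref{alg:hydrocommit} line by line, charging every heap operation $O(\log T)$, since the indexed max-heap $H$ holds exactly $T$ keys, one per period. The accounting splits into three parts: the preprocessing, the main \texttt{while}-loop for a fixed generator $j$, and the reset \texttt{for}-loop that follows it.

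\emph{Preprocessing.} Lines 1--3 initialize $G_t$, $\text{used}_t$ and the heap. Building a heap from $T$ keys takes $O(T)$ time, since heapify is linear; even the naive $O(T\log T)$ is absorbed by the final bound. Line 4 sorts the $n$ generators by $\kappa_j\sqrt{\pi_j}$, which costs $O(n\log n)$. This is the source of the $n\log n$ term.

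\emph{Main loop for fixed $j$.} The key step is to bound the number of \texttt{while} iterations. Each iteration extracts $\tau=\mathrm{argmax}_t k_t$, which is an $O(1)$ peek at the heap top, and then takes one of two branches.
\begin{itemize}
\item If $\text{used}_\tau=j$, we set $k_\tau\gets\infty$. This looks circular, since an infinite key then sits at the top of a max-heap. I read the intent as removing $\tau$ from consideration, i.e.\ the key becomes $-\infty$ (or equivalently $\tau$ is deleted). Under that reading, each period can trigger this branch at most once per $j$: after the reset its key is never the maximum again while finite keys remain.
\item Otherwise we place one unit of generator $j$, which happens exactly $\pi_j\le T$ times.
\end{itemize}
Hence the number of iterations for generator $j$ is at most $\pi_j+T\le 2T$. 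Each iteration performs $O(1)$ key updates at $O(\log T)$ apiece, so this loop costs $O(T\log T)$. Termination uses $\pi_j\le T$, which holds by the definition $\pi_j=\min(\cdot,T)$: there are always enough distinct unused periods to finish placing.

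\emph{Reset loop for fixed $j$.} This loop scans all $T$ periods and updates at most $T$ keys, each in $O(\log T)$, for $O(T\log T)$.

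Summing over the $n$ generators gives $O(nT\log T)$. Adding the sort gives $O(n\log n+nT\log T)=O(n(\log n+T\log T))$. The returned variance is computed in $O(T)$, which is dominated.

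The main obstacle is justifying that each period is marked at most once per generator. The \texttt{while}-loop's termination and iteration count depend on this, and it requires fixing the interpretation of the $k_\tau\gets\infty$ step as a removal or demotion. I would state that interpretation explicitly. As a fallback, I would note that with a \texttt{used} flag per period, even in the worst case the number of non-placing iterations is at most the number of distinct periods, $T$.
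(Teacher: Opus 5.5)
Your proof is correct and uses the same line-by-line accounting as the paper. Setup and the sort cost $O(n\log n+T)$, and each of the $n$ generators incurs $O(T)$ \texttt{while}-iterations plus a $T$-step reset loop, each with $O(\log T)$ heap updates, for a total of $O(n(\log n+T\log T))$. Your explicit reading of $k_\tau\gets\infty$ as a demotion (to $-\infty$ or removal), together with the bound of at most $\pi_j+T\le 2T$ iterations, is more careful than the paper's remark that the loop ``considers a different period on each iteration'': that remark silently assumes the same reading, and it is not literally true, since a period can be selected once to place and again to be demoted.
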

\begin{proof}
Lines $1-4, 24$ run in $O(n\log n+T)$ time.  The outer for loop (line \(5\)) runs $n$ times. The inner while loop on line \(7\) runs $O(T)$ times, since it considers a different period on each iteration. The inner for loop (line 18) runs $T$ times. All lines inside loops run in $O(1)$ time, except lines $9$ and $16$, which run in $O\left(\log T\right)$~\cite{CLRS2009}. Overall, \cref{alg:hydrocommit} runs in
\begin{align*}
    O\left(\underbrace{n\log n + T}_{\text{lines \(1-4,24\)}} + \underbrace{n(T\log T + T)}_{\text{lines \(5-23\)}}\right) = O(n(\log n +T\log T))
\end{align*}
time.
\end{proof}
\end{appendices}

\bibliographystyle{IEEEtran}
\bibliography{bibfile}

\end{document}